\newtheorem{theorem}{Theorem}[section]
\newtheorem{corollary}[theorem]{Corollary}
\newtheorem{lemma}[theorem]{Lemma}
\newtheorem{definition}[theorem]{Definition}
\newtheorem{preremark}[theorem]{Remark}
\newenvironment{remark}{\begin{preremark}\rm}{\medskip \end{preremark}}
\numberwithin{equation}{section}
\newcommand{\norm}[1]{\left\Vert#1\right\Vert}
\newcommand{\abs}[1]{\left\vert#1\right\vert}
\newcommand{\R}{\mathbb R}
\DeclareMathOperator{\Vol}{Vol}
\def\be{\begin{equation}}
\def\ee{\end{equation}}
\begin{document}
\title[Meyers-Serrin on manifolds]{The Meyers-Serrin theorem on Riemannian manifolds:\\ a survey}

\author[Chan]{Chi Hin Chan}
\address{Department of Applied Mathematics, National Yang Ming Chiao Tung University, 1001 Ta Hsueh Road, Hsinchu, Taiwan 30010, ROC}
\email{cchan@math.nctu.edu.tw}

\author[Czubak]{Magdalena Czubak}
\address{Department of Mathematics\\
University of Colorado Boulder\\ Campus Box 395, Boulder, CO, 80309, USA}
\email{czubak@math.colorado.edu}

\begin{abstract}
We revisit the questions of density of smooth functions, and differential forms, in Sobolev spaces on Riemannian manifolds.  We carefully show equivalence of weak covariant derivatives to weak partial derivatives.
\end{abstract}
\subjclass[2010]{46E35, 53C21;}
\keywords{Meyers-Serrin, weak derivatives, density, manifolds}
\maketitle

 \section{Introduction}
 It is now well-known that in the Euclidean case, for an open subset of $\R^n$, the Sobolev spaces defined  using weak derivatives and as a completion of smooth functions are the same.  This is the celebrated 1964 result of Meyers and Serrin \cite{HW}.  In the case of Riemannian manifolds, the books of Aubin \cite{Aubin_book} and Hebey \cite{Hebey_book} define the Sobolev  spaces using the completion of smooth functions in the appropriate norms. Then the books discuss the questions of density of smooth functions versus density of compactly supported smooth functions. This is a natural question to address for open Riemannian manifolds as the generalization of what happens in the case of $\R^n$.  However, these references do not explicitly address the extension of the Meyers-Serrin Theorem to the Riemannian manifolds. To our knowledge, the article that directly addresses this question is the work of Guidetti,  G\"uneysu, and Pallara \cite{GGP}.  There, the authors study Hermitian vector bundles and the questions of density of smooth sections 
in Sobolev spaces defined using linear partial differential operators.   As one application, the main result is applied to the Sobolev spaces on Riemannian manifolds \cite[Cor 3.6]{GGP}.  

Due to the importance of the Sobolev spaces on Riemannian manifolds, we believe it can be beneficial to have a self-contained proof of the Meyers-Serrin Theorem written for functions and for $1$-forms/vector fields that besides being self-contained could also be written using as elementary language as possible to make the proofs more accessible to a PDE community. 

Another article worth mentioning is by Eichhorn  \cite[Prop 1.4]{Eichhorn}, where for one derivative, the equivalence of the Sobolev spaces using weak derivatives, density of smooth functions and compactly supported smooth functions is stated.  The proof gives an outline of a procedure that can be applied in the case of a Riemannian manifold.  The proof is also done for a general differential $q$-form, and can be applied for $q=0$, which gives the case of functions.  In this proof, the first step is to localize the $W^{1,p}$ $k$-form by multiplying it by a Lipschitz cut-off function.  The second step is to use partition of unity, which then further localizes the form to a single chart.   The motivation behind this lies in the fact that smooth approximation of $W^{1,p}$ entities compactly supported in a single coordinate chart could be carried out via standard mollification.  This is due to the fact the coordinate system identifies the chart domain with some region in the Euclidean space. However, this is exactly
the place at which one has to be more careful about how to switch from the original concept of a weak covariant derivative to that of the weak Euclidean partial derivative arising from the coordinate
chart.  A priori, we only have the existence of the weak covariant derivative that satisfies the integration by parts formula. The details were omitted by Eichhorn.  More is shown in \cite{GGP}, but only for one derivative of a function.

%
%

 Therefore, we also present the details of the equivalence of the weak covariant derivatives to Euclidean partial derivatives, which we believe to be the heart of the matter, which again to our knowledge, has been usually omitted in the past literature.

As the goal is to be self-contained, we begin by reviewing some of the definitions.  This article is written to be accessible to anyone with elementary knowledge of differential geometry and analysis, including graduate students.  The goal is to present the material from the point of view of an analyst.
 
The main contribution of this article is the following theorem and its corollary.

\begin{theorem}\label{wu}
Let $k\geq 1$, and $(\phi,\Omega)$ be a chart on $M$. Suppose $u$ is a function on $M$ that has weak covariant derivatives of order $k$ on $M$.  Then $u$ has weak partial derivatives of order $k$ in the Euclidean sense in the image of the chart $\phi(\Omega)$, and the weak covariant derivatives have the same local coordinate representation as in \eqref{generalc},
where we replace the classical derivatives by weak derivatives.
\end{theorem}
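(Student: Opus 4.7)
My plan is to proceed by induction on the order $k$.

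\textbf{Base case ($k=1$).} Denote the weak covariant derivative of $u$ by the $1$-form $v$ with chart components $v_i$. The defining integration-by-parts identity, rewritten in local coordinates using $d\operatorname{vol}_g=\sqrt{|g|}\,dx$ and $\operatorname{div}_g X=\tfrac{1}{\sqrt{|g|}}\partial_j(\sqrt{|g|}\,X^j)$, takes the form
$$\int_{\phi(\Omega)} v_i X^i \sqrt{|g|}\,dx \;=\; -\int_{\phi(\Omega)} u\,\partial_j\!\bigl(\sqrt{|g|}\,X^j\bigr)\,dx$$
for every smooth vector field $X$ supported in a compact subset of $\Omega$ (extended by zero to $M$). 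For any $\psi\in C_c^\infty(\phi(\Omega))$ and fixed index $i_0$, I would test against the vector field with $X^{i_0}=\psi/\sqrt{|g|}$ and all other components zero. Since $\sqrt{|g|}$ is smooth and strictly positive on $\phi(\Omega)$, this $X$ is smooth and compactly supported. Substitution collapses the left-hand side to $\int v_{i_0}\psi\,dx$ and the right-hand side to $-\int u\,\partial_{i_0}\psi\,dx$, so $\partial_{i_0}u=v_{i_0}$ weakly in the Euclidean sense, which is precisely \eqref{generalc} at $k=1$.

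\textbf{Inductive step.} Assume the theorem at order $k-1$, so the chart components of $\nabla^{k-1}u$ are given by the weak version of \eqref{generalc} and the weak Euclidean partials of $u$ up to order $k-1$ exist on $\phi(\Omega)$. The classical formula \eqref{generalc} at order $k$ expresses $(\nabla^k u)_{j\,i_1\dots i_{k-1}}$ as $\partial_j(\nabla^{k-1}u)_{i_1\dots i_{k-1}}$ minus a linear combination of Christoffel-symbol terms contracted against $\nabla^{k-1}u$. To promote this to a weak identity, I would test the defining integration-by-parts formula for $\nabla^k u$ against a smooth compactly supported tensor $S$ whose only nonzero chart component, in the slot dual to a fixed index string $(j_0,i_1^0,\dots,i_{k-1}^0)$, equals $\psi/\sqrt{|g|}$ for arbitrary $\psi\in C_c^\infty(\phi(\Omega))$. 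Expanding the covariant divergence of $S$ in local coordinates produces a $\partial_{j_0}\psi$ term together with Christoffel-symbol contributions contracted against $\nabla^{k-1}u$; the $\sqrt{|g|}$ factors cancel exactly as in the base case. After rearranging, the resulting identity shows that $(\nabla^{k-1}u)_{i_1^0\dots i_{k-1}^0}$ has a weak $\partial_{j_0}$-derivative in the Euclidean sense, equal to the expression dictated by the Christoffel expansion. Substituting the inductive representation of $\nabla^{k-1}u$ into this identity yields the full weak analogue of \eqref{generalc} at order $k$, and together these produce all weak Euclidean partials of $u$ up to order $k$.

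\textbf{Main obstacle.} The principal difficulty lies in the bookkeeping for the inductive step: selecting the test tensor so that the $\sqrt{|g|}$ factors coming from $d\operatorname{vol}_g$, from the divergence formula, and from the normalization of $S$ all cancel cleanly, and then matching the residual Christoffel-symbol terms with the classical expression \eqref{generalc}. The smoothness and strict positivity of $\sqrt{|g|}$ on $\phi(\Omega)$ and the smoothness of the Christoffel symbols in a coordinate chart are what make the ``divide by $\sqrt{|g|}$'' trick legitimate, and all Christoffel contributions are of order $\leq k-1$ in $u$ and are therefore already controlled by the inductive hypothesis.
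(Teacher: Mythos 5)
Your proposal follows essentially the same route as the paper: your base case is the paper's one-derivative argument (test covector with components $\psi/\sqrt{g}$, the divergence formula \eqref{useful}, cancellation of the volume factor), and your inductive step mirrors the paper's general case (test tensors of the form $g_{b_1c_1}\cdots g_{b_kc_k}\psi^{c_1\dots c_k}/\sqrt{g}$, coordinate expansion of the divergence, metric compatibility to absorb the Christoffel terms, then differentiation of the inductive coordinate representation to get all Euclidean weak partials of order $k$). One step you leave implicit should be stated: Definition \ref{weaknablafl} pairs $u$ with $(\nabla^*)^k S$, not $\nabla^{k-1}u$ with $\nabla^*S$, so before your divergence expansion can produce ``Christoffel contributions contracted against $\nabla^{k-1}u$'' you must apply the definition of the weak $(k-1)$-st covariant derivative to the smooth compactly supported tensor $\nabla^*S$ (using that the hypothesis provides weak covariant derivatives of every order up to $k$); this is precisely the reduction the paper makes explicit in \eqref{3cov} and, for $k=2$, via the auxiliary $1$-form $\alpha_i=\nabla^j\theta_{ji}$. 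With that one line added your induction is sound; the remaining differences are cosmetic (the paper works out $k=1,2,3$ explicitly and inducts from $k=3$, and uses Lemma \ref{weakt} to pass between single-component and full tensor test fields, which is what legitimizes your choice of a test tensor with a single nonzero component).
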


\begin{corollary}\label{wa}
Let $q\geq 1$, and $(\phi,\Omega)$ be a chart on $M$. Suppose $\alpha$ is a $q$-form on $M$ that has a weak covariant derivative on $M$.  Then $\alpha$ has a weak partial derivative in the Euclidean sense in the image of the chart $\phi(\Omega)$, and the weak covariant derivative has the same local coordinate representation as in \eqref{generalca},
where we replace the classical derivatives by weak derivatives.
\end{corollary}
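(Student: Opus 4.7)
The plan is to reduce the $q$-form corollary to the scalar case of Theorem \ref{wu} by examining the local coordinate expansion $\alpha=\alpha_{i_1\ldots i_q}\,dx^{i_1}\wedge\cdots\wedge dx^{i_q}$ in the chart $(\phi,\Omega)$. The goal is to show that each coefficient function $\alpha_{i_1\ldots i_q}$ admits weak Euclidean partial derivatives on $\phi(\Omega)$, and that these weak derivatives, combined with the appropriate Christoffel corrections, recover the components of $\nabla\alpha$ as predicted by \eqref{generalca}.

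To implement this, I would fix a test function $\psi\in C_c^\infty(\phi(\Omega))$ and, for each choice of indices $(j,i_1,\ldots,i_q)$, form the compactly supported smooth $(q+1,0)$-tensor field $T=\psi\,\partial_j\otimes\partial_{i_1}\otimes\cdots\otimes\partial_{i_q}$ on $\Omega$, extended by zero to all of $M$; this is a legitimate test object because $\psi$ has compact support inside $\phi(\Omega)$. Feeding $T$ into the defining integration-by-parts identity for the weak covariant derivative $\nabla\alpha$, the left-hand side collapses to $\int_{\phi(\Omega)}(\nabla_j\alpha)_{i_1\ldots i_q}\,\psi\,\sqrt{|g|}\,dx$, while the right-hand side is the integral of $\alpha$ against the covariant divergence of $T$. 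Expanding that divergence in coordinates produces a term containing $\partial_j\psi$, terms containing Christoffel symbols $\Gamma^k_{ji_r}$ (one from each tensor slot), and a term containing $\partial_j\sqrt{|g|}$. After absorbing the smooth nonvanishing factor $\sqrt{|g|}$ into $\psi$, what remains is precisely the Euclidean weak-derivative identity
\[
\int_{\phi(\Omega)}\alpha_{i_1\ldots i_q}\,\partial_j\psi\,dx = -\int_{\phi(\Omega)}\Bigl[(\nabla_j\alpha)_{i_1\ldots i_q}+\sum_{r=1}^{q}\Gamma^k_{ji_r}\,\alpha_{i_1\ldots k\ldots i_q}\Bigr]\psi\,dx,
\]
which is the weak form of \eqref{generalca}.

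The main obstacle is the index bookkeeping: one must expand the covariant divergence of $\psi\,\partial_j\otimes\partial_{i_1}\otimes\cdots\otimes\partial_{i_q}$ and verify that the Christoffel terms produced match exactly those predicted by \eqref{generalca}, with no leftover contributions. The compensating role of $\partial_j\sqrt{|g|}$ and the smoothness of the metric coefficients have already been handled for scalar functions in the proof of Theorem \ref{wu}, so for $q$-forms one only needs to carry the extra lower indices through the same manipulations, which is routine. Once the weak-derivative identity above is established for every multi-index, the conclusion of Corollary \ref{wa} follows immediately.
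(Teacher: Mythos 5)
Your proposal is correct and takes essentially the same route as the paper: plug a compactly supported test tensor built from a Euclidean test function with indices lowered by the metric into the defining identity \eqref{weaka}, expand the adjoint $\nabla^\ast$ in coordinates using metric compatibility and $\partial_l\sqrt{g}=\sqrt{g}\,\Gamma^i_{il}$, and read off the weak form of \eqref{generalca}. The only cosmetic difference is that the paper inserts the factor $1/\sqrt{g}$ directly into the test tensor (cf. \eqref{theta2}, \eqref{0thetak}) so the volume weight cancels at once, whereas you keep the unweighted tensor $\psi\,\partial_j\otimes\partial_{i_1}\otimes\cdots\otimes\partial_{i_q}$ and absorb $\sqrt{g}$ into the test function at the end; both manipulations are legitimate and lead to the same identity.
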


\begin{remark}
 We break the proof into cases depending on the value of $k$.  While there is a clear pattern in the proof, the computations become more involved when $k\geq 3$.  For that reason, we explicitly work out the cases $k=1, 2, 3$, and then use the case $k=3$ as the base case for the induction.  
\end{remark}

Once we have the equivalence of the weak covariant derivatives and weak partial derivatives, we can use it to show the Meyers-Serrin Theorem on Riemannian manifolds which follows from the following theorem.

\begin{theorem}[Meyers-Serrin on $M$] \label{HWM}
Let $k\geq 1$, and  $1\leq p<\infty$. Let $\alpha$ be a function or a differential form.  If $\alpha \in W^{k,p}(M)$, there exists a sequence of $\{\alpha_m\}$, such that $\alpha_m$ is smooth and $\alpha_m \to \alpha$ in $W^{k,p}(M).$
\end{theorem}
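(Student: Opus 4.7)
The plan is to adapt the classical Meyers--Serrin argument to $M$ by combining a partition of unity with Euclidean mollification on charts, using Theorem~\ref{wu} and Corollary~\ref{wa} to pass between weak covariant derivatives and weak Euclidean partial derivatives. First I would fix a countable, locally finite atlas $\{(\phi_j,\Omega_j)\}_{j\geq 1}$ on $M$ with relatively compact $\Omega_j$, together with a smooth partition of unity $\{\chi_j\}$ subordinate to it. Writing $\alpha=\sum_j \chi_j\alpha$, each piece $\chi_j\alpha$ lies in $W^{k,p}(M)$ and is compactly supported inside $\Omega_j$.

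Next, for each $j$ I would transport $\chi_j\alpha$ to $\phi_j(\Omega_j)\subset\mathbb{R}^n$ via the chart. By Theorem~\ref{wu} (for functions) and Corollary~\ref{wa} (for forms), the coordinate components of $\chi_j\alpha$ admit weak Euclidean partial derivatives through order $k$, and the covariant derivatives of every order $\ell\leq k$ are given by the local coordinate formulas \eqref{generalc}, \eqref{generalca} with classical partial derivatives replaced by weak ones. Because the metric coefficients, their inverses, and all Christoffel symbols are smooth on $\Omega_j$ and therefore uniformly bounded together with all their derivatives on the compact set $\supp(\chi_j)$, the Riemannian $W^{k,p}$-norm of a compactly supported object in $\Omega_j$ is equivalent to the Euclidean $W^{k,p}$-norm of its coordinate components on $\phi_j(\Omega_j)$. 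Applying standard Friedrichs mollification to the coordinate components and transporting the result back to $M$ (extended by zero outside $\Omega_j$) yields, for every $\delta>0$, a smooth, compactly supported approximation $\alpha_j^{(\delta)}$ on $M$ that converges to $\chi_j\alpha$ in $W^{k,p}(M)$ as $\delta\to 0$.

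Given $\varepsilon>0$, for each $j$ I would choose $\delta_j>0$ small enough that $\supp(\alpha_j^{(\delta_j)})\subset\Omega_j$ and $\|\alpha_j^{(\delta_j)}-\chi_j\alpha\|_{W^{k,p}(M)}<\varepsilon\,2^{-j}$, and then set $\alpha_\varepsilon=\sum_j \alpha_j^{(\delta_j)}$. Local finiteness of $\{\Omega_j\}$ guarantees that this sum is locally finite and hence defines a smooth object on $M$; the triangle inequality then yields $\|\alpha-\alpha_\varepsilon\|_{W^{k,p}(M)}<\varepsilon$. Taking $\varepsilon=1/m$ produces the desired sequence $\{\alpha_m\}$.

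The main obstacle is entirely concentrated in the second step, namely the passage from the manifold-level weak covariant derivative to weak Euclidean partial derivatives in each chart. This is exactly the content of Theorem~\ref{wu} and Corollary~\ref{wa}; once those are in hand, the remainder is routine bookkeeping around the partition of unity together with the standard $\varepsilon/2^j$ summation trick borrowed verbatim from the Euclidean Meyers--Serrin proof.
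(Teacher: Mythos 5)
Your proposal is correct and follows essentially the same route as the paper: localize with a partition of unity over a locally finite atlas, invoke Theorem~\ref{wu} and Corollary~\ref{wa} to pass to weak Euclidean derivatives in each chart, approximate the compactly supported pieces there, and reassemble with the $\varepsilon\,2^{-j}$ trick, using the chartwise comparability of the Riemannian and Euclidean norms. The only cosmetic difference is that you mollify the localized pieces directly instead of citing the classical Meyers--Serrin theorem in each chart, which amounts to the same thing since each piece is compactly supported.
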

We give definitions of Sobolev spaces in Section \ref{defSob}.  The main outline of the proof is as follows.  Given $u$ in $W^{k,p}(M)$, where $u$ here can be a function, vector field or a differential form:

\noindent\textbf{Step 1:}. Cover $M$ by charts, and consider the associated subordinate partition of unity.\\
\noindent\textbf{Step 2:}. Use the equivalence of the weak covariant derivatives to weak partial derivatives, Theorem \ref{wu} and Corollary \ref{wa}, to obtain existence of Euclidean weak partial derivatives, and show the function (or the vector field or the differential form) and its Euclidean weak partial derivatives are in $L^p$ in the image of each chart.\\
\noindent\textbf{Step 3:}. Cite the classical Meyers-Serrin Theorem \cite{HW} to show that in each chart we can obtain a sequence converging in $W^{k,p}$, where $W^{k,p}$ refers to the Euclidean Sobolev space in the image of the chart.  \\
\noindent\textbf{Step 4:}. Use partition of unity to obtain a sequence converging in $W^{k,p}(M)$.

We finish by noting that Theorem \ref{wu}, Corollary \ref{wa} and the estimates similar to the ones in the proof of Step 4 in Theorem \ref{HWM} supplement the hidden details in the proof of  \cite[Prop 1.4]{Eichhorn}.

\section*{Acknowledgements}

We thank the referee for the careful reading of the manuscript and for the helpful comments.

Chi Hin Chan is funded in part by a grant from the Ministry of Science and Technology of Taiwan (109-2115-M-009 -009 -MY2). This work was partially completed while Chi Hin Chan was working as a Center Scientist at the National Center for Theoretical Science of Taiwan R.O.C. 

Magdalena Czubak is funded in part by a grant from the Simons Foundation \# 585745.   

\section{Preliminaries}\label{prelim}
We start with a review of the fundamentals of smooth manifolds and Riemannian geometry.  
\subsection{Riemannian geometry setup}
We always sum with respect to repeated indices.   Given a smooth manifold $M^n$, and a chart $(\phi, \Omega)$, which determines coordinates $(x^1, \dots, x^n)$ on $\Omega$, we recall that the partial derivatives of a function $u$ are defined by
\be\label{partial}
\partial_i u(p)=\frac{\partial u}{\partial {x^i}} (p):=\frac{\partial }{\partial {x^i}}(u\circ \phi^{-1})(\phi(p)), \quad p \in \Omega,
\ee
assuming the function $u\circ \phi^{-1}$ is differentiable, so the right hand side makes sense.
If $M$ is a Riemannian manifold $(M, g)$, we can talk about covariant derivatives of $u$. Let $\mathfrak X(M)$ denote all smooth vector fields on $M$. A linear connection $\nabla$ is a bilinear map 
\[
\nabla: \mathfrak X(M) \times \mathfrak X(M) \to \mathfrak X(M)
\]
written as $\nabla_XY$ such that $\nabla_{fX}Y=f\nabla_XY$, and 
\[
\nabla_X(fY)=X(f)Y+f\nabla_X Y,
\]
 for all smooth $f$ on $M$.  Among all linear connections there exists one unique connection, the Levi-Civita connection, that is both compatible with the metric and symmetric, which means
 \begin{align*}
 Xg(Y,Z)=g(\nabla_XY,Z)+g(Y, \nabla_XZ),\\
 \nabla_XY-\nabla_YX=[X,Y],
 \end{align*}
 for all smooth vector fields $X, Y, Z$ and where $[X, Y]$ is the Lie bracket of vector fields $X$ and $Y$.
The first order covariant derivative of $u$, written in coordinates, is
\[
du=\partial_i u dx^i.
\]
We note that to define $du$, it is enough for $M$ to be a smooth manifold.  The existence of the Levi-Civita connection is needed to define the second covariant derivative of $u$.  Namely
\[
\nabla du=\nabla_j \partial_i u \ dx^i\otimes dx^j,
\]
where $\nabla$ refers to the covariant derivative induced by the Levi-Civita connection, and
\be\label{second}
\nabla_j \partial_i u=\partial_j \partial_i u-\Gamma_{ji}^l \partial_l u,
\ee
where we note we sum with respect to $l$, and $\Gamma_{ji}^l$ are Christoffel symbols given by
 \[
\Gamma_{ij}^l=\frac 12 g^{l m}(\partial_i g_{jm}+\partial_j g_{im}-\partial_m g_{ij}).
\]

 By symmetry of the metric, we can see that $\Gamma_{ij}^l=\Gamma_{ji}^l$.

%

In general, the covariant derivative of $u $ of order $k$ can be written in coordinates as follows.
\[
\nabla^k u=\nabla_{i_k}\dots \nabla_{i_1}u dx^{i_1}\otimes\dots \otimes dx^{i_k},
\]
where
\be\label{generalc}
\nabla_{i_k}\dots \nabla_{i_1}u=\partial_{i_k}(\nabla_{i_{k-1}}\dots\nabla_{i_2}\partial_{i_1} u)-\sum_{\mu=1}^{k-1}\Gamma_{i_k i_\mu}^l\nabla_{i_{k-1}}\dots\nabla_{i_{\mu+1}}\nabla_l \nabla_{i_{\mu-1}}\dots\nabla_{i_2}\partial_{i_1} u.
\ee
For a $q$-form $\alpha$, one covariant derivative is
\be\label{generalca}
\nabla_i \alpha_{i_1\dots i_q}=\partial_i \alpha_{i_1\dots i_q}-\sum_{\mu=1}^{q}\Gamma_{i i_\mu}^l \alpha_{i_1\dots i_{\mu-1}li_{\mu+1}\dots i_q}.
\ee

To help the reader who might be new to this notation, we make some comments.  The notation of $\nabla$ with the subscript $i$ can be found in \cite[p. 244 (9.9) and p. 302]{Frankel}, and it is equivalent to the notation that uses ``$/$" or ``$;$" and then the index $i$, see again \cite[p. 244 (9.9) and p. 302]{Frankel} and also \cite[p.98]{Lee_RG}.  Each one has its advantages. 

Next, we note that the second term on the right hand side of formula \eqref{generalc} has two sums: one with respect to $\mu$, and then for each choice of $\mu$, there is a sum with respect to $l$.  We also observe that formula \eqref{generalc} works for all $k\geq 1$.  In the case of $k=1$, there is no sum, and we have just $\partial_{i_1} u$ on the right hand side.  (We also note that it might be helpful to sometimes use the notation that $\nabla_{i_1}u=\partial_{i_1}u$.)  In case of $k=2$, we get exactly \eqref{second}.  For future reference, we apply \eqref{generalc} when $k=3$ to obtain
\be\label{third}
\nabla_{i_3}\nabla_{i_2} \nabla_{i_1}u=\partial_{i_3}(\nabla_{i_2}\partial_{i_1} u)-\Gamma_{i_3 i_2}^l\nabla_{l}\partial_{i_1} u-\Gamma_{i_3 i_1}^l\nabla_{i_2}\partial_{l} u.
\ee
Finally, we remark that the formula \eqref{generalc} is consistent with the definition of a covariant derivative of a general covariant $k-1$-tensor (see e.g., \cite[p.98, Prop. 4.18]{Lee_RG}).  This is no surprise since $\nabla^{k-1}u $ is a covariant $k-1$-tensor.  We can also see this in \eqref{generalca} for $k-1=q$.

%

We use the Jacobi formula that says (see for example \cite[p.44 (3.4.8) and (3.4.9)]{Wald})

\be\label{0Jacobif}
\frac 1g\partial_l g=g^{ab}\partial_l g_{ab},
\ee
where $g$ denotes the determinant of the metric $g$, and that summing over $i$ gives
\be\label{0Christo}
\Gamma^i_{il}=\frac 12g^{ab}\partial_l g_{ab}.
\ee
Then 
\be\label{dg}
\partial_l \sqrt{g}=\sqrt{g} \Gamma^i_{il}, \quad\partial_l g^{-\tfrac 12}=-g^{-\tfrac 12} \Gamma^i_{il}.
\ee
Next, if $\theta$ is a $1$-form, then the adjoint $\nabla^\ast \theta$ can be shown to be
\be\label{useful}
\nabla^\ast \theta=-\nabla^j\theta_{j}=-g^{ji}\nabla_i \theta_j= -\nabla_i \theta^i =-\partial_i\theta^i-\Gamma_{ij}^i\theta^j=-\frac1{\sqrt{g}}\partial_i (\sqrt{g} g^{ij}\theta_j),
\ee
where the last equality follows from $\theta^i=g^{ij}\theta_j$ and \eqref{dg}.

The Riemannian metric $g$, by definition is an inner product on tangent vectors.  However, it also induces an inner product on $k$-tensors, which in coordinates, we can write as
\be\label{metric}
g(S,T)=g^{i_1j_1}\cdots g^{i_kj_k}S_{{i_1}\dots{i_k}}T_{j_1\dots j_k}.
\ee
If $M$ is an orientable Riemannian manifold, we can define a measure on $M$, denoted by $\Vol_M$, and so that $\Vol_M$ has a coordinate representation
\[
\Vol_M=\sqrt{\det{g}} dx^1\wedge \dots dx^n.
\]

We also recall the technical property that manifolds are paracompact:  every cover has a locally finite refinement.  Moreover, given an open cover and any basis for the topology, there exists a countable, locally finite open refinement consisting of elements of the basis.  We can take the basis to be what is called regular balls: charts covering $M$ so that they are homeomorphic to Euclidean balls, and so the closure of each chart is compact and contained in another chart that is also homeomorphic to another (larger) Euclidean ball \cite[Thm 1.15, Prop 1.19]{Lee_smooth}.
\subsection{Definition of weak derivatives and Sobolev spaces}\label{defSob}

We start this section with a lemma in a Euclidean setting.  The lemma facilitates connecting regular weak derivatives with covariant derivatives.  The proof is elementary, so we only sketch the main idea.

\begin{lemma}\label{weakt}
Let $D\subset \R^n$ be an open set, and $k\geq 1$.  The following two statements are equivalent.
\begin{itemize}
\item Let $1\leq j_1, \dots, j_k\leq n$ be fixed. There exists an $L^1_{loc}$ function $v$ such that 
\[
\int_D v\psi dx=(-1)^k\int_D u \partial_{j_1}\cdots \partial_{j_k} \psi dx,
\]
for all $\psi\in C^\infty_c(U)$.
\item There exists an $L^1_{loc}$ $k$-tensor field $\tau$ such that
\be\label{tensor_w}
\int_D \tau_{i_k\dots i_1}\psi_{i_k\dots i_i}  dx=(-1)^k\int_D u \partial_{i_1}\cdots \partial_{i_k} \psi_{i_k\dots i_i} dx,
\ee
for all compactly supported in $D$ $k$-tensor fields $\psi$.
\end{itemize}
\end{lemma}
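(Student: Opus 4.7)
The plan is to exploit the fact that the tensor identity in the second statement is, by the summation convention, a sum over all $n^k$ component identities, while each component identity is (up to a relabeling of the multi-index) precisely an instance of the scalar statement in the first bullet. So the proof is really a bookkeeping exercise matching multi-indices on both sides, and I would split it into the two implications in the standard way.

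For the forward direction, I would assume the first bullet holds for every fixed choice of the multi-index $(j_1,\dots,j_k)$, producing an $L^1_{loc}$ function $v_{j_1\dots j_k}$ for each such choice. I then define the candidate tensor by setting $\tau_{i_k\dots i_1}:=v_{i_1\dots i_k}$, which is visibly $L^1_{loc}$ as a tensor field since each component is. Given a compactly supported smooth tensor test field $\psi_{i_k\dots i_1}$, the tensor integral in \eqref{tensor_w} unpacks as a finite sum over multi-indices, and to each term I apply the scalar identity from the first bullet with the test function taken to be the component $\psi_{i_k\dots i_1}\in C^\infty_c(D)$. Summing the resulting identities and re-invoking the summation convention produces exactly \eqref{tensor_w}.

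For the backward direction, I would fix an arbitrary multi-index $(j_1,\dots,j_k)$ and construct a ``probe'' tensor test field supported on that single component: given any $\varphi\in C^\infty_c(D)$, define $\psi$ to have a single nonzero component $\psi_{j_k\dots j_1}=\varphi$ and all other components zero. Plugging this $\psi$ into \eqref{tensor_w} collapses every sum to a single term and yields the scalar identity in the first bullet, with $v=\tau_{j_k\dots j_1}\in L^1_{loc}(D)$. Varying over $\varphi$ gives the weak partial derivative for this multi-index, and varying over $(j_1,\dots,j_k)$ handles every choice.

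The only ``obstacle'' here is notational rather than mathematical, namely keeping the order of indices in $\tau_{i_k\dots i_1}$ consistent with the order of differentiations $\partial_{i_1}\cdots\partial_{i_k}$ throughout; once a convention is fixed, both implications are immediate. This is presumably why the authors promise only to sketch the main idea.
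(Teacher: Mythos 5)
Your argument is correct and follows essentially the same route as the paper's sketch: for the forward implication you apply the scalar identity componentwise and sum over indices, and for the backward implication you test \eqref{tensor_w} against a tensor whose only nonzero component is the given scalar test function (the paper writes this probe as $\psi_{i_k\dots i_1}=\psi\,\Pi_{l=1}^k\delta_{i_l j_l}$), which is exactly your construction.
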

\begin{proof}
If the first statement is true, given a compactly supported tensor $\psi$ with components $\psi_{i_k\dots i_1}$, we can apply the first statement to each $\psi_{i_k\dots i_1}$, and then sum over all the possible indices.  If the second statement is true, given $1\leq j_1, \dots, j_k\leq n$ and a compactly supported function $\psi$, we can let $\psi_{i_k\dots i_1}=\psi\Pi_{l=1}^k\delta_{i_l j_l}$.
\end{proof}

The above lemma is useful as the most natural way to define weak derivatives in a manifold setting is using, of course, integration by parts, which leads to adjoints of covariant derivatives applied to tensors.  We make this precise now, and state the definitions in the increasing order of difficulty, starting with functions.

\begin{definition}[Weak derivative for a function] \label{weaknablaf}
 Let $u$ be an $L^1_{loc}$ integrable function, then $u$ is weakly differentiable if there exists an $L^1_{loc}$ $1$-form $\tau$ such that
\begin{equation}\label{weakf}
\int_M g(\tau, \theta )\Vol_M=\int_M u\nabla ^*  \theta \Vol_M,
\end{equation}
and the above equality holds for any smooth compactly supported  $1$-form $\theta$.
\end{definition}
\begin{remark}
If $u$ had classical derivatives, then $\tau$ would be equal to $du$.  So the above is a definition for a weak $du$.  Using the metric, we could ``raise" the index to obtain a weak gradient $\nabla u$.  However, working with the $1$-form is convenient for higher order covariant derivatives, as those naturally lead to covariant tensors (instead of contravariant tensors, of which $\nabla u$ is a representative).  Finally, it is an exercise to see that $g(du, du)=g(\nabla u, \nabla u)$, so this choice also does not affect the definition of the Sobolev spaces.
\end{remark}
We now can define a Sobolev space $W^{1,p}(M)$. Let $1\leq p<\infty$, $(M,g)$ be a smooth complete Riemannian manifold of dimension $n<\infty$. Define
\[
W^{1,p}(M)=\{u: M\to \R \ | \ u, du \in L^p(M) \},
\]
with the norm
\[
\norm{\alpha}_{W^{1,p}(M)}=\left\{\int_M \abs{u}^p \Vol_M\right\}^\frac 1p+\left\{\int_M g(du, du)^\frac p2 \Vol_M\right\}^\frac 1p,
\]
where $du$ is understood in the weak sense.  For higher order derivatives we have 

 \begin{definition}[Weak $\nabla^\ell$ for functions] \label{weaknablafl}
 Let $u$ be an $L^1_{loc}$ integrable function, then $u$ has a weak $\nabla^\ell$ derivative, $\ell\geq 1$,  if there exists an $L^1_{loc}$ covariant $\ell$-tensor $\tau$ such that
\begin{equation}\label{weakl}
\int_M g(\tau, \theta )\Vol_M=\int_M u(\nabla ^*)^\ell  \theta \Vol_M,
\end{equation}
and the above equality holds for any smooth compactly supported  $\ell$-tensor $\theta$.
\end{definition}

Then
\[
W^{k,p}(M)=\{ u: M\to \R \ | \  u, \nabla^\ell u \in L^p(M), \ell=1, \dots, k \},
\] 
where $\nabla^iu$, $i=1, \dots, k$ are considered in the weak sense.

We finish this section by defining weak covariant derivatives for a $q$-form $\alpha$.  We are in particular interested in the special case of $q=1$ as it has applications to fluid mechanics on manifolds.

 \begin{definition}[Weak $\nabla^l$ for differential forms] \label{weaknablaa}
Let $l\geq 1$, and let $\alpha$ be an $L^1_{loc}$ integrable $q$-form, then $\alpha$ has a weak $\nabla^l$ derivative,  if there exists an $L^1_{loc}$ covariant $q+l$-tensor $\tau$ such that
\begin{equation}\label{weaka}
\int_M g(\tau, \theta )\Vol_M=\int_M g(\alpha, (\nabla ^*)^l  \theta) \Vol_M,
\end{equation}
and the above equality holds for any smooth compactly supported  $q+l$-tensor $\theta$.
\end{definition}

\section{Equivalence of weak derivatives}\label{equiv}
We start with showing the equivalence holds for functions, first for one derivative then for two derivatives. 
 The reader can choose to read only the parts that are directly of interest without having to deal with the technical details of the more involved cases.
\subsection{Functions: one and two derivatives}
Here we prove Theorem \ref{wu} for the case of one and two covariant derivatives.
\begin{proof}
Let $(\phi, \Omega)$ be a local coordinate chart.

\noindent{\textbf{One derivative.}}
  In order for $u$ to have a weak first order partial derivative in the Euclidean sense, we need to show there exists $\tilde \tau_i$, such that
\be\label{need1}
\int_{\phi(\Omega)} \tilde\tau_i \psi dx=-\int_{\phi(\Omega)} (u\circ \phi^{-1})\partial_i \psi dx,
\ee
holds for all compactly supported functions $\psi$ in $\phi(\Omega).$ Let $\psi$ be given.  Then since $u$ has a weak derivative on $M$, \eqref{weakf} gives that there exists some $1$-form $\tau$ such that
\be\label{k1}
\int_\Omega g^{ij}\tau_i \theta_j \sqrt{g}dx^1\wedge\cdots\wedge dx^n=-\int_\Omega u\nabla^i\theta_i \sqrt{g}dx^1\wedge\cdots\wedge dx^n
\ee
for all $\theta$ compactly supported in $\Omega$.  In particular, we can let $$\theta=\theta_i dx^i, \quad \theta_i=\frac 1{\sqrt{g}}g_{ij}\psi^j\circ \phi,$$ where $\psi^j=\delta^{jl}\psi_l=\psi_j$. 
Then by definition of the integration on manifolds, $\delta^i_j=g^{il}g_{lj}$, and \eqref{useful}, \eqref{k1} is equivalent to
\be\label{0step2eq2}
\begin{split}
\int_{\phi(\Omega)} (\tau_i \circ \phi^{-1})\psi_i dx=-\int_{\phi(\Omega)} u\circ \phi^{-1}\partial_i \psi_i dx,
\end{split}
\ee
 which is exactly \eqref{need1} if we let $\tilde \tau=\tilde \tau_i dx^i$, with 
 \be
 \tilde \tau_i=\tau_i \circ \phi^{-1}.
 \ee

{\textbf{Two derivatives.}}
By the previous case we already have an existence of $\tilde \tau$ as above so that \eqref{need1} holds.  By Lemma \ref{weakt} we then need to show that there exists a $2$-tensor $\tilde A$ such that
\be\label{need2}
\int_{\phi(\Omega)} \tilde A_{ij} \psi_{ij}dx=\int_{\phi(\Omega)} (u\circ \phi^{-1})\partial_j\partial_i \psi_{ij}dx,
\ee
holds for all compactly supported $\psi$ in $\phi(\Omega).$ Let $\psi$ be given. 

By definition of having a weak second covariant derivative of $u$, Definition \ref{weaknablafl}, we have an existence of a $2$-tensor $A$ such that
\be\label{2cov}
\int_M g(A, \theta )\Vol_M=\int_M u\nabla ^*\nabla ^*  \theta \Vol_M,
\ee
for all $2$-tensors $\theta$.   Let $\theta$ be a $2$-tensor such that 
\be\label{theta2}
\theta_{ij}=g_{il}g_{jm}\frac{\psi^{lm}\circ \phi}{\sqrt{g}},
\ee
with $\psi^{lm}=\psi_{lm}$.  
Then by \eqref{metric}
\be\label{g2}
g(A,\theta)=g^{il}g^{jm}A_{ij}\theta_{lm}=A_{ij}\frac{\psi_{ij}\circ \phi}{\sqrt{g}}.
\ee
We claim 
\be\label{2weak}
\tilde A_{ij}=(A_{ij}+\Gamma^l_{ij}\tau_l)\circ \phi^{-1}.
\ee
To see this, consider
\begin{align}
&\int_{\phi(\Omega)} \left((A_{ij}+\Gamma^l_{ij}\tau_l)\circ \phi^{-1}\right)\psi_{ij}dx=\int_{\Omega} (A_{ij}+\Gamma^l_{ij}\tau_l)\psi_{ij}\circ \phi dx^1\wedge\cdots \wedge dx^n\nonumber\\
&\qquad\qquad\qquad= \int_\Omega u\nabla ^*\nabla ^*  \theta \Vol_M+\int_{\Omega} \Gamma^l_{ij}\tau_l(\psi_{ij}\circ \phi) dx^1\wedge\cdots \wedge dx^n,\label{endeq}
\end{align}
 where we used \eqref{g2} and \eqref{2cov}.  To handle the first integral on the right hand side we observe that since in coordinates
 \[
 \nabla^\ast \nabla^\ast \theta=\nabla^i \nabla^j \theta_{ji}, 
 \]
 we can think of $\nabla^j\theta_{ji}$ as a coordinate function of a $1$-form $\alpha=\alpha_i dx^i$ with $\alpha_i=\nabla^j\theta_{ji}$, then since we also have an existence of $1$-weak derivative, we can apply \eqref{weakf} with $\theta=\alpha$ to obtain
 \[
  \int_\Omega u\nabla ^*\nabla ^*  \theta \Vol_M=-\int_\Omega g(\tau,  \alpha) \Vol_M.
 \]
 Then from a computation in coordinates using that $\nabla$ commutes with the metric shows
 \[
 g(\tau,\alpha)=g^{ab}\tau_a \alpha_b= g^{ab}\tau_a\nabla^j\theta_{jb}=\tau_a \nabla_l \frac{\psi^{la}\circ \phi}{\sqrt{g}}.
 \]
 Computing further, using \eqref{dg} and the definition of covariant derivative of a $2$-tensor, we arrive at
 \be
 \begin{split}
 g(\tau,\alpha)&= \frac{\tau_a}{\sqrt{g}} (\partial_l \psi^{la}\circ\phi+\Gamma^l_{lb}\psi^{ba}\circ\phi+\Gamma^a_{lb}\psi^{lb}\circ\phi)-\tau_a \frac{\psi^{la}\circ\phi}{\sqrt{g}}\Gamma_{il}^i\\
 &= \frac{\tau_a}{\sqrt{g}} (\partial_l \psi^{la}\circ\phi+\Gamma^a_{lb}\psi^{lb}\circ\phi).
 \end{split}
 \ee
 Inserting this into \eqref{endeq}, we have
 \begin{align*}
\int_{\phi(\Omega)} \left((A_{ij}+\Gamma^l_{ij}\tau_l)\circ \phi^{-1}\right)\psi_{ij}dx&=- \int_{\phi(\Omega)}  \tilde \tau_a (\partial_l \psi^{la}+\Gamma^a_{lb}\circ\phi^{-1}\psi^{lb}) + \Gamma^l_{ij}\circ\phi^{-1}\tilde\tau_l(\psi_{ij}) dx\\
&=-\int_{\phi(\Omega)}  \tilde \tau_a \partial_l \psi^{la} dx\\
&=\int_{\phi(\Omega)} u\partial_a\partial_l \psi^{la}dx,
\end{align*}
 where we used that  $\tilde \tau$ satisfies \eqref{need1}.  Since $\psi^{la}=\psi_{la}$ in the Euclidean setting, this shows \eqref{need2} holds.
 
 Here, we also show that
 \be\label{obvious}
 \partial_i \tilde \tau_j=\tilde A_{ij},
 \ee
 where $\partial_i$ is taken in the weak sense. By properties of $\tilde A_{ij}$, if $\psi$ is compactly supported in $\phi(\Omega)$, it follows
 \[
 \int \tilde A_{ij}\psi dx=\int u \partial_j\partial_i \psi dx=-\int \tilde \tau_j \partial_i \psi dx,
 \]
 where we used properties of $\tilde \tau_j$.  Then $ \tilde A_{ij}$ satisfies the definition of the weak $\partial_i$ of $\tilde \tau_j$ as needed.  Observe, this also implies that $\partial_i \tau_j$ exists in the Euclidean weak sense.
 
 Then from \eqref{obvious}, \eqref{2weak}, and using that $\tau_l=\partial_l u$ in the weak sense, we obtain
 \be\label{shocker}
 A_{ij}=\partial_i\partial_j u-\Gamma^l_{ij}\partial_l u,
 \ee
 which is exactly the weak form of \eqref{second} as needed.
 \end{proof}
 \subsection{Functions: three derivatives and higher}
 \begin{proof}

 To simplify the notation we make the following simplifications.  We write the volume form in coordinates as $\sqrt{g}dx$, and we drop writing out the composition with either $\phi$ or $\phi^{-1}$. We also change the presentation a bit.  The benefit is that it naturally generalizes to discussing covariant derivatives of $q$-forms for $q\geq 1$.  We proceed by induction and start with $k=3$.  The idea is to first show that the tensor $A_{jk}$ from two covariant derivatives discussed above for functions has a weak partial derivative in the Euclidean sense ($k$ is used here as an index).  After that, using standard properties of weak Euclidean derivatives, we can show that $\partial_i \tilde A_{jk}$ exists and is equal to the weak partial derivatives, $\partial_i \partial_j \partial_k u$.  
 
 Technically, we could start with two derivatives as the base case, but three derivatives is where the computation gets more complicated, so it is useful to see that case as the base case.
  
 From Definition \ref{weaknablafl}, we have an existence of a $3$-tensor $B$ such that
\[
\int_M g(B, \theta )\Vol_M=\int_M u(\nabla ^*)^3  \theta \Vol_M,
\]
for all compactly supported $3$-tensors $\theta$.  We can use this to show that $B$ is the weak covariant derivative of $A$ (c.f. \cite[Thm 1 p.261]{Evans}), so that
  \be\label{3cov}
\int_M g(B, \theta )\Vol_M=\int_M g(A,\nabla^\ast \theta) \Vol_M,
\ee
for all compactly supported $3$-tensors $\theta$. To show $\partial_i A_{jk}$ exists,  we note we can use the definition of the integration on manifolds,  to show we can find $\tilde \tau$ such that
 \be\label{need3}
 \int_\Omega \tilde \tau_{ijk}\psi_{ijk} dx=-\int_\Omega A_{jk}\partial_i \psi^{ijk} dx,
 \ee
  where $\psi^{ijk}$ is compactly supported in $\phi(\Omega)$ (and where again we drop the compositions with $\phi$ and $\phi^{-1}$).
   
 We define $\tilde \tau$ to be
 \be\label{3def}
 \tilde \tau_{ijk}=B_{ijk}+\Gamma^l_{ij}A_{lk}+\Gamma^l_{ik}A_{jl}.
 \ee
  
 Let $\psi$ be a compactly supported $3$-tensor in $\phi(\Omega)$, then with $\theta_{ijk}=g_{ia}g_{jb}g_{kc}\frac{\psi^{abc}}{\sqrt{g}},$ we have
 \be\label{left3}
 g(B,\theta){\sqrt{g}}=B_{ijk}{\psi^{ijk}}.
 \ee
 Next, we can show that for a $3$-tensor $X$
 \be\label{3tensordiv}
\nabla^iX_{ijk}=\nabla_iX^i_{jk}=\frac{1}{\sqrt{g}}\partial_i(\sqrt{g}X^i_{jk})-\Gamma^l_{i j}X^i_{lk }-\Gamma^l_{i k}X^i_{jl},
\ee
 which with $X=\theta$ as above gives
 \be\nonumber
 \begin{split}
\nabla^i\theta_{ijk}&=\frac{1}{\sqrt{g}}\partial_i(g_{jb}g_{kc}{\psi^{ibc}})-\Gamma^l_{i j}g_{lb}g_{kc}\frac{\psi^{ibc}}{\sqrt{g}}-\Gamma^l_{i k}g_{jb}g_{lc}\frac{\psi^{ibc}}{\sqrt{g}}\\
&=\frac{1}{\sqrt{g}}g_{jb}g_{kc}\partial_i\psi^{ibc}+\frac{1}{\sqrt{g}}\partial_i(g_{jb}g_{kc}){\psi^{ibc}}-\Gamma^l_{i j}g_{lb}g_{kc}\frac{\psi^{ibc}}{\sqrt{g}}-\Gamma^l_{i k}g_{jb}g_{lc}\frac{\psi^{ibc}}{\sqrt{g}}\\
\end{split}
\ee
 Then
 \[
 g(A, \nabla^\ast \theta)\sqrt{g}=-\left(A_{jk}\partial_i \psi^{ijk} +g^{jj'}g^{kk'}A_{jk}(\partial_i(g_{j'b}g_{k'c})-\Gamma^l_{i j'}g_{lb}g_{k'c}-\Gamma^l_{i k'}g_{j'b}g_{lc}){\psi^{ibc}}\right),
 \]
 and applying 
 \[
 \nabla_kg_{jh}=\partial_k g_{jh} -\Gamma^l_{kj}g_{lh} -\Gamma^l_{kh}g_{jl}=0,
\]
 twice gives
 \begin{align*}
 g(A, \nabla^\ast \theta)\sqrt{g}&=-\left(A_{jk}\partial_i \psi^{ijk} +g^{jj'}g^{kk'}A_{jk}(\Gamma^l_{i b}g_{j'l}g_{k'c}+\Gamma^l_{i c}g_{j'b}g_{k'l}){\psi^{ibc}}\right)\\
 &=-\left(A_{jk}\partial_i \psi^{ijk} +A_{lk}\psi^{ijk}\Gamma^l_{i j}+A_{jl}\psi^{ijk}\Gamma^l_{i k}\right).
 \end{align*}
 When we plug this and \eqref{left3} into \eqref{3cov} we obtain \eqref{need3} with \eqref{3def} as needed.  Note, since $\tilde \tau_{ijk}$ represents weak partial derivative of $A_{jk}$ we also have
 \be\label{obvious2b}
 B_{ijk}=\partial_i A_{jk}-\Gamma^l_{ij}A_{lk}-\Gamma^l_{ik}A_{jl},
 \ee
 which is the weak version of \eqref{generalc}.  To see that we have weak partial derivatives up to order three, we apply weak partial derivative to \eqref{2weak} since all the needed partials exist (using here again properties of the weak partial derivatives, e.g., \cite[Thm 1 p.261]{Evans}).  
 
 \subsubsection{Inductive step} Taking the above case as the base case, we proceed by induction.  We claim that in general the weak partial derivative of the $k-1$-covariant derivative of $u$, $\partial_{a_1}B_{a_2\dots a_{k}}$ is
  \be\label{weakk}
 \tilde \tau_{a_1\dots a_{k}}=B_{a_1\dots a_{k}}+\sum_{\mu=2}^{k}\Gamma^i_{a_1a_\mu}B_{a_2\dots a_{\mu-1} i a_{\mu+1}\dots a_{k} }.
 \ee
 
  Given a compactly supported $k$ tensor $\psi$ in $(\phi(\Omega))$, we then let
 \be\label{0thetak}
 \theta_{b_1\dots b_{k}}=\frac{1}{\sqrt{g}}g_{b_1c_1}\dots g_{b_{k}c_{k}}\psi^{c_1\dots c_{k}},
 \ee
 and we can compute as before to show that the analog of \eqref{3cov} implies the analog of \eqref{need3} (we omit the details).  After that, we claim that in general a $k$-order weak partial derivative is heauristically given by
  \be\label{ih2}
   \tilde B_{a_1\dots a_k}=B_{a_1\dots a_k}+\sum_{m=1}^{k-1} \ \sum_{1\leq b_1,\dots, b_{m}\leq n } f_{b_1\dots b_m} B_{b_1\dots b_m} ,
  \ee

  where $f_{b_1\dots b_m}$ denotes some smooth function which depends on the Christoffel symbols and their derivatives.  
  
%
%
%
Note \eqref{ih2} holds for $k=3$, since by \eqref{2weak}, \eqref{obvious2b} and \eqref{obvious} we have
\be
\partial_i\tilde A_{jk}=\partial_i(A_{jk}+\Gamma^l_{jk}\tau_l)=B_{ijk}+\Gamma^l_{ij}A_{lk}+\Gamma^l_{ik}A_{jl}+\Gamma^l_{jk}A_{il}+\partial_i \Gamma^l_{jk}\tau_l.
\ee
We can then show \eqref{ih2} holds for $k+1$ given \eqref{ih2} and \eqref{weakk} for $k+1$ by applying $\partial_i$ to \eqref{ih2}.
 \end{proof}
 
 \subsection{Covariant derivatives of differential forms}
We now give a formula for a weak partial derivative of a $q$-differential form and leave out the computations needed to verify it.  Given a $q$-form $\alpha$, the weak derivative $\partial_i$ of $\alpha_{a_1\dots a_{q}}$,
 is
 \[
 \tilde \tau_{ia_1\dots a_{q}}=\tau_{ia_1\dots a_{q}}+\sum_{\mu=1}^{q}\Gamma^h_{ia_\mu}\alpha_{a_1\dots a_{\mu-1} h a_{\mu+1}\dots a_{q} },
 \]
  where the existence of $\tau_{ia_1\dots a_{q}}$ is due to having a weak covariant derivative of $\alpha$.  For example, the weak partial derivative of a $1$-form $\alpha$ can be shown to be  (c.f. \eqref{2weak})
 \be
\tilde \tau_{ij}=(\tau_{ij}+\Gamma_{ij}^l \alpha_l)\circ \phi^{-1}.
\ee

\section{Meyers-Serrin Theorem on manifolds}\label{HW}
Having shown the equivalence of the weak covariant derivatives to weak partial derivatives, we can now show the Meyers-Serrin Theorem, Theorem \ref{HWM}, on Riemannian manifolds.  
We provide the details of the outline given in the introduction.  As the proof is the simplest for $W^{1,p}(M)$ for functions, and that is also the one that might be useful the most to the readers, we present its details separately.  The other cases are presented in a separate section.
\subsection{Proof of Theorem \ref{HWM}: Meyers-Serrin Theorem for $W^{1,p}(M)$ for functions}

\noindent\textbf{Step 1:}. Let $u\in W^{1,p}(M)$, and let $\{(\phi_m, \Omega_m)\}$ be charts covering $M$.  Then, by smoothness of the metric, we can show that for each chart $(\phi_m, \Omega_m)$ there exist positive constants $c_m, C_m$ such  that we have
\be\label{step1eq}
c_m\abs{v}_0^2\leq g(v,v) \leq C_m \abs{v}^2_0, \quad c_m \leq \sqrt{g}\leq C_m,\quad \abs{\Gamma_{ij}^l}\leq C_m, 1\leq i, j, l\leq n,
\ee
where $\abs{v}_0$ is the Euclidean norm of $v$, when $v$ is written in coordinates in a given chart.  Note, we can suppose $C_m\geq 1$.

Let $\{\eta_m\}$ be a partition of unity subordinate to $\{(\phi_m, \Omega_m)\}$ . \\

\noindent\textbf{Step 2:}.
Fix a chart $(\phi, \Omega)$ and a corresponding member of the partition of unity, and denote it by $\eta$. Consider the pullback of $\eta u$ to $\phi(\Omega)$, and let $c_m=c$ and $C_m=C$ in \eqref{step1eq}.  Then 
\be\nonumber
\begin{split}
&\norm{\phi^{-1\ast}(\eta u )}_{L^p(\phi(\Omega))}^p=\int_{\phi(\Omega)}\abs{\phi^{-1\ast}(\eta u)}^p_0 dx\\
&\qquad\qquad \leq\frac{1}{c^{\tfrac p2+1}}\int_{\phi(\Omega)}  \abs{\phi^{-1\ast}(\eta u)}_{g\circ \phi^{-1}}^p\sqrt{g\circ \phi^{-1}}dx\leq \frac{1}{c^{\tfrac p2+1}}\norm{\eta u}_{L^p(M)}^p<\infty.
\end{split}
\ee
This shows $\phi^{-1\ast}(\eta u)$ is in $L^p$  on $\phi(\Omega)\subset \R^n$.  From the last section we know we can show that $\eta u$ has a Euclidean weak partial derivative in $\phi(\Omega)$. We now show that partial derivative is in $L^p(\phi(\Omega))$.  We have
\begin{align*}
\norm{\tilde\tau_{i}}_{L^p(\phi(\Omega))}\leq\left\{\int_{\phi(\Omega)}\abs{\phi^\ast\tau}_0^p dx\right\}^\frac1p
& \leq \frac{1}{c^{\tfrac 12+\tfrac1p}} \left\{\int_{\phi(\Omega)}\abs{\phi^\ast\tau}_{g\circ \phi^{-1}}^p\sqrt{g\circ \phi^{-1}} dx\right \}^\frac 1p\\
&\leq \frac{\norm{\tau}_{L^p(M)}}{c^{\tfrac 12+\tfrac 1p}} <\infty.
\end{align*}

    \noindent\textbf{Step 3:} Given that  $\phi^{-1\ast}(\eta u)$ is in $W^{1,p}_0(\phi(\Omega))$ we can approximate it by a compactly supported sequence $\{\tilde u_k\}$.  We can do this in each chart $(\phi, \Omega)$.  If the charts are indexed with an index $m$, then the corresponding sequence can be denoted by $\{\tilde u_{m,k}\}$. Let $\{\tilde u_{m,k}\}$ be chosen so that 
  \be\label{0choosem}
  \norm{\phi_m^{-1\ast}(\eta_mu)-\tilde u_{m,k}}_{W^{1,p}(\phi_m(\Omega_m))}\leq \frac{1}{2^{k}2^m}\frac{1}{C^{\tfrac 12+\tfrac 1p}_m},
  \ee
  We can then form the desired sequence, $\{u_k\}$, by letting for $x\in M$, 
 \be
 u_k(x)=\sum_{m=1}^{\infty} u_{m,k}(x):=\sum_{m=1}^{\infty} \phi^\ast_m \tilde u_{m,k}(x).
 \ee
 Note, by the paracompactness, as reviewed in Section \ref{prelim}, the sum is finite  for each $x$.
 
   \noindent\textbf{Step 4:}. We now show this sequence converges to $u$ in $L^p(M)$.  
  By construction
  \begin{align*}
  \norm{u-u_k}_{L^p(M)}&=\norm{\sum_{m=1}^{\infty}\eta_m u- \sum_{m=1}^{\infty} \phi^\ast_m \tilde u_{m,k}   }_{L^p(M)}\\
&\leq\sum_{m=1}^{\infty}\left\{ \int_{\phi_m(\Omega_m)}\abs{\phi^{-1\ast}_m(\eta_mu)-   \tilde u_{m,k}}^p_{g\circ\phi_m^{-1}}\sqrt{g\circ \phi_m^{-1}} dx\right\}^\frac 1p\\
&\leq\sum_{m=1}^{\infty}C^{\tfrac 12+\tfrac 1p}_m \left\{ \int_{\phi_m(\Omega_m)}\abs{\phi_m^{-1\ast}(\eta_mu)-   \tilde u_{m,k}}^p_0dx\right\}^\frac 1p\\
&\leq\sum_{m=1}^{\infty}C^{\tfrac 12+\tfrac 1p}_m \frac{1}{2^{k}2^m}\frac{1}{C^{\tfrac 12 +\tfrac 1p}_m}=\frac 1{2^{k}}.
  \end{align*}

%
Next we show that $\{d u_k\} $ converges to $d u=\tau$ in $L^p(M)$.
\begin{align*}
\norm{d u -d  u_{k}}_{L^p(M)}
&\leq\sum_{m=1}^{\infty}\left\{\int_{\phi_m(\Omega_m)}\abs{\phi^{-1\ast}_md(\eta_m u)-   \phi^{-1\ast}_md(\phi^\ast_m \tilde u_{m,k})}^p_{g\circ\phi_m^{-1}}\sqrt{g\circ \phi_m^{-1}} dx\right\}^\frac 1p\\
&\leq\sum_{m=1}^{\infty}C^{\tfrac 12+\tfrac1p}_m
\left\{\int_{\phi_m(\Omega_m)}\abs{\tilde \tau- d \tilde u_{m,k}}_0^p dx \right\}^\frac 1p.
\end{align*}
Then \eqref{0choosem} implies convergence in $W^{1,p}(M)$ as needed.
\subsection{Proof of Theorem \ref{HWM}: Meyers-Serrin for $W^{2,p}(M)$ for functions}
Here Step 1 remains exactly the same as in the above proof.  Step 2 gets supplemented with showing that the weak second order partial derivative of $u$ is in $L^p(\phi(\Omega))$.  We use \eqref{2weak} and estimate
\begin{align*}
\norm{\partial_i \partial_j u}_{L^p(\phi_m(\Omega))}&\leq\norm{\nabla du}_{L^p(\phi_m(\Omega))}+C_m\norm{\tau}_{L^p(\phi_m(\Omega))}\\
&\leq \frac{1}{c_m^{\frac 12+\frac 1p}}(\norm{\nabla du}_{L^p(M)}+C_m\norm{du}_{L^p(M)})<\infty.
\end{align*}
 In Step 3, we choose the sequence $\{\tilde u_{m,k}\}$ so that 
  \be\label{0choosem2}
  \norm{\phi^{-1\ast}_m(\eta_mu)-\tilde u_{m,k}}_{W^{2,p}(\phi_m(\Omega_m))}\leq \frac{1}{2^{k}2^m}\frac{1}{C^{\frac 32+\tfrac 1p}_m}.
  \ee
Finally, Step 4 gets supplemented by showing that $\{\nabla d u_k\} $ converges to $\nabla d u$ in $L^p(M)$.
\begin{align*}
\norm{\nabla d u -\nabla d  u_{k}}_{L^p(M)}&=\norm{\nabla d(\sum_{m=1}^{\infty}\eta_m u)- \nabla d(\sum_{m=1}^{\infty} \phi^\ast_m \tilde u_{m,k} )  }_{L^p(M)}\\
&\leq\sum_{m=1}^{\infty} \norm{\nabla d(\eta_m u)-   \nabla d(\phi^\ast_m \tilde u_{m,k})   }_{L^p(M)}.
\end{align*}
Since
\begin{align*}
\nabla_i \partial_j(\eta_m u)-   \nabla_i \partial_j(\phi^\ast_m \tilde u_{m,k}) =\partial_i\partial_j (\eta_m u)-\partial_i\partial_j(\phi^\ast_m \tilde u_{m,k})-\Gamma_{ij}^l\partial_l(\eta_m u)+\Gamma_{ij}^l\partial_l(\phi^\ast_m \tilde u_{m,k}),
\end{align*}
 we have 
\begin{align*}
\norm{\nabla d u -\nabla d  u_{k}}_{L^p(M)}\leq\sum_{m=1}^{\infty}C_m^{\frac 32+\frac 1p} \norm{\phi^{-1\ast}_m(\eta_m u)-   \tilde u_{m,k}  }_{W^{2,p}(\phi_m(\Omega_m))},
\end{align*}
and the convergence follows from \eqref{0choosem2}.  Higher order derivatives are similar.


\begin{thebibliography}{1}

 \bibitem{Aubin_book}
Thierry Aubin.
\newblock {\em Nonlinear analysis on manifolds. {M}onge-{A}mp\`ere equations},
  volume 252 of {\em Grundlehren der mathematischen Wissenschaften [Fundamental
  Principles of Mathematical Sciences]}.
\newblock Springer-Verlag, New York, 1982.

\bibitem{Eichhorn}
J\"{u}rgen Eichhorn.
\newblock Elliptic differential operators on noncompact manifolds.
\newblock In {\em Seminar {A}nalysis of the {K}arl-{W}eierstrass-{I}nstitute of
  {M}athematics, 1986/87 ({B}erlin, 1986/87)}, volume 106 of {\em Teubner-Texte
  Math.}, pages 4--169. Teubner, Leipzig, 1988.

\bibitem{Evans}
Lawrence~C. Evans.
\newblock {\em Partial differential equations}, volume~19 of {\em Graduate
  Studies in Mathematics}.
\newblock American Mathematical Society, Providence, RI, second edition, 2010.

\bibitem{Frankel}
Theodore Frankel.
\newblock {\em The geometry of physics. An introduction}, 
 \newblock Cambridge University Press, Cambridge, third edition, 2012.


\bibitem{GGP}
Davide Guidetti, Batu G\"{u}neysu, and Diego Pallara.
\newblock {$L^1$}-elliptic regularity and {$H=W$} on the whole {$L^p$}-scale on
  arbitrary manifolds.
\newblock {\em Ann. Acad. Sci. Fenn. Math.}, 42(1):497--521, 2017.

\bibitem{Hebey_book}
Emmanuel Hebey.
\newblock {\em Nonlinear analysis on manifolds: {S}obolev spaces and
  inequalities}, volume~5 of {\em Courant Lecture Notes in Mathematics}.
\newblock New York University, Courant Institute of Mathematical Sciences, New
  York; American Mathematical Society, Providence, RI, 1999.

\bibitem{Lee_smooth}
John~M. Lee.
\newblock {\em Introduction to smooth manifolds}, volume 218 of {\em Graduate
  Texts in Mathematics}.
\newblock Springer, New York, second edition, 2013.

\bibitem{Lee_RG}
John~M. Lee.
\newblock {\em Introduction to {R}iemannian manifolds}, volume 176 of {\em
  Graduate Texts in Mathematics}.
\newblock Springer, Cham, 2018.
\newblock Second edition of [ MR1468735].

\bibitem{HW}
Norman~G. Meyers and James Serrin.
\newblock {$H=W$}.
\newblock {\em Proc. Nat. Acad. Sci. U.S.A.}, 51:1055--1056, 1964.

\bibitem{Wald}
Robert~M. Wald.
\newblock {\em General relativity}.
\newblock University of Chicago Press, Chicago, IL, 1984.


\end{thebibliography}
\end{document}